\theoremstyle{plain}
\newtheorem{theorem}{Theorem}[section]
\newtheorem{lemma}[theorem]{Lemma}
\theoremstyle{remark}
\newtheorem*{note}{Note}
 \newcommand{\nc}{\newcommand}
 \newcommand{\rnc}{\renewcommand}
\nc{\beqn}[1]{\begin{equation}\label{#1}} \nc{\ee}{\end{equation}}
 \rnc{\l}{\left} \rnc{\r}{\right}
 \nc{\ri}{\mathrm{i}}  \nc{\rd}{\mathrm{d}}
 \nc{\SL}{\mathrm{SL}}\nc{\PSL}{\mathrm{PSL}}
 \nc{\re}{\mathrm{e}}
 \nc{\f}{\varphi} \nc{\g}{\gamma} \nc{\kp}{\kappa}
 \nc{\lm}{\lambda} \nc{\om}{\omega} \nc{\s}{\sigma}
 \nc{\vt}{\vartheta} \rnc{\a}{\alpha} \rnc{\b}{\beta}
 \rnc{\d}{\delta} \rnc{\t}{\theta} \nc{\e}{\varepsilon}
 \nc{\G}{\Gamma}
 \nc{\oQ}{\mathbb{Q}} \nc{\oR}{\mathbb{R}} \nc{\oC}{\mathbb{C}}
 \nc{\oZ}{\mathbb{Z}} \nc{\oA}{\mathbb{A}} \nc{\oN}{\mathbb{N}}
 \nc{\oH}{\mathbb{H}} \nc{\oE}{\mathbb{E}} \nc{\oP}{\mathbb{P}}
 \nc{\sO}{\mathcal{O}}\nc{\sI}{\mathcal{I}}\nc{\sH}{\mathcal{H}}
 \nc{\sF}{\mathcal{F}}\nc{\sA}{\mathcal{A}}\nc{\sQ}{\mathcal{Q}}
 \nc{\sL}{\mathcal{L}}
 \nc{\ga}{\mathfrak{a}}  \nc{\gb}{\mathfrak{b}}
 \nc{\gc}{\mathfrak{c}}
 \nc{\gw}{\mathfrak{w}}
  \nc{\gp}{\mathfrak{p}} \nc{\gq}{\mathfrak{q}}
 \nc{\gA}{\mathfrak{A}} \nc{\gB}{\mathfrak{B}}
\nc{\bv}{\mathbf{v}}\nc{\bV}{\mathbf{V}}
 \nc{\abar}{\overline{\a}} \nc{\bbar}{\overline{\b}}
 \nc{\gbar}{\overline{\g}} \nc{\dbar}{\overline{\d}}
 \nc{\taubar}{\overline{\tau}}\nc{\lmbar}{\overline{\lm}}
 \nc{\kpbar}{\overline{\kp}}\nc{\mubar}{\overline{\mu}}
 \nc{\tbar}{\overline{\t}}
\nc{\ws}{\widetilde{\s}} \nc{\wPhi}{\widetilde{\Phi}}
\nc{\wX}{\widetilde{X}}\nc{\wQ}{\widetilde{Q}}
\nc{\wsL}{\widetilde{\sL}}
 \nc{\Eq}{\ =\ }\nc{\Ge}{\ \ge\ }\nc{\Le}{\ \le\ }
 \nc{\Def}{\ :=\  }
  \rnc{\mod}{\ \mathrm{mod}\ }
 \providecommand{\dummy}{} \nc{\sfrac}[2]{
  \renewcommand{\dummy}{\scriptstyle{\frac{#1}{#2}}}
  {\raise 0.5pt\hbox{$\dummy$}}\displaystyle}
 \nc{\half}{\sfrac{1}{2}}
 \nc{\norm}{\mathrm{norm}} \nc{\ClGp}{\mathrm{ClGp}}
 \nc{\Nm}{\mathrm{Nm}}
 \nc{\idl}[1]{\ensuremath{\big\langle#1\big\rangle}}
 \rnc{\Im}{\mathrm{Im}} \rnc{\Re}{\mathrm{Re}}
 \nc{\mat}[1]{\begin{pmatrix}#1\end{pmatrix}}
 \nc{\inter}{\cap} \nc{\less}{\smallsetminus}
 \nc{\dby}[2]{\frac{d#1}{d#2}} \nc{\ddby}[2]{\frac{d^2#1}{d#2^2}}
 \nc\del{\partial} \nc{\delby}[2]{\frac {\del {#1}}{\del{#2}} }
\nc{\bdelby}[2]{\displaystyle\frac {\del {#1}}{\del{#2}} }
\nc{\ddelby}[2]{\frac{\del^2 {#1}}{\del {#2} ^2} }
\nc{\One}{\mathbf{1}}
\nc{\ufrac}[2]{\scriptstyle{\frac{#1}{#2}}}
\nc{\uhalf}{\ufrac{1}{2}}
\begin{document}
\alphafootnotes
\author[David Williams]{David Williams\footnotemark }
\chapter[Dynamical-system picture of a phase transition]%
{A dynamical-system picture of a simple branching-process phase
transition}
\footnotetext[1]{Mathematics Department, Swansea University, Singleton Park,
  Swansea SA2 8PP; dw@reynoldston.com}
\arabicfootnotes
\contributor{David Williams \affiliation{Swansea University}}
\renewcommand\thesection{\arabic{section}}
\numberwithin{equation}{section}
\renewcommand\theequation{\thesection.\arabic{equation}}
\numberwithin{figure}{section}
\renewcommand\thefigure{\thesection.\arabic{figure}}

\begin{abstract}
This paper proves certain results from the `appetizer for
non-linear Wiener--Hopf theory', \cite{WNL}. Like that paper, it
considers only the simplest possible case in which the underlying
Markov process is a two-state Markov chain.  Key generating
functions provide solutions of a simple two-dimensional dynamical
system, and the main interest is in the way in which Probability
Theory and ODE theory complement each other.  No knowledge of
either ODE theory or Wiener--Hopf theory is assumed. Theorem 1.1
describes one aspect of a phase transition which is more
strikingly conveyed by Figures \ref{Super} and \ref{crit}.
\end{abstract}

\subparagraph{AMS subject classification (MSC2010)}60J80, 34A34

\section{Introduction}

This paper is a development of something I mentioned briefly in
talks I gave at Bristol, when John Kingman was in the audience,
and at the Waves conference in honour of John Toland at Bath. I
thanked both John K and John T for splendid mathematics and for
their wisdom and kindness.

The main point of the paper is to prove Theorem \ref{phase} and
related results in a way which emphasizes connections with a
simple dynamical system. The phase transition between Figures
\ref{Super} and \ref{crit} looks more dramatic than the famous
$1$-dimensional result we teach to all students.

The model studied here is a special case of the model introduced
in \cite{WNL}.  I called that paper, which contained no proofs, an
`appetizer'; but before writing a fuller version, I became caught
up in Jonathan Warren's enthusiasm for the relevance of
\emph{complex} dynamical systems (in $\oC^2$). See \citet*{WW}.
This present paper, \emph{completely independent of the earlier
appetizer} and of my paper with Warren, can, I hope, provide a
more tempting appetizer for what I called `non-linear Wiener--Hopf
theory'.  No knowledge of any kind of Wiener--Hopf theory is
assumed here.

I hope that Simon Harris and I can throw further light on the
models considered here, on the other models in \cite{WNL}, and on
still other, quite different, models.

\subparagraph{Our model.}A particle moving on the real line can either be
of type $+$ in which case it moves right at speed $1$ or of type
$-$ in which case it moves left at speed $1$.

Let $q_-$ and $q_+$ be fixed numbers with $q_- > q_+ > 0$, and let
$\b$ be a positive parameter. We write $K_{\pm}=q_{\pm}+\b$. So,
to display things, we have
\beqn{Assms}
    q_- > q_+ > 0, \quad \b > 0, \quad K_+ = q_+ + \b,
    \quad K_- = q_- + \b.
\ee
We define
\[
   \b_{\rm c} := {\textstyle \frac12}\l(\sqrt{q_-} - \sqrt{q_+}\r)^2.
\]

A particle of type $\pm$ can flip to the `opposite' type at rate
$q_{\pm}$ and can, at rate $\b$, die and at its death give birth
to two daughter particles (of the same type and position as their
`parent'). This is why $\b$ is a `birth rate'. The usual
independence conditions hold.

\begin{theorem}\label{phase}
Suppose that our process starts at time $0$ with just $1$ particle
of type $+$ at position $0$.
\begin{enumerate}[{\rm (a)}]
\item Suppose that $\b>\b_{\rm c}$.  Then, with probability $1$,
each of infinitely many particles will spend time to the left of
$0$.

\item Suppose instead that $\b\le \b_{\rm c}$.  Then, with
probability not less than $1~-~\sqrt{q_+/q_-}$, there will never
be any particles to the left of $0$.
\end{enumerate}
\end{theorem}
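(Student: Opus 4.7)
My plan is to prove Theorem \ref{phase} by exploiting an \emph{additive martingale} of the branching process for Part~(b), and by a phase-plane analysis of the natural survival-probability ODE for Part~(a).

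For real $\theta$ and positive weights $z_+, z_-$, I would consider
\[ M_t \Def \sum_u \re^{-\theta X_u(t)} z_{\xi_u(t)}, \]
the sum running over particles alive at time $t$, where $X_u, \xi_u$ denote position and type of particle $u$. A standard generator computation, accounting for motion, type-flips and branching, shows that $M$ is a martingale exactly when
\[ \theta^2 - (q_- - q_+)\theta + \b(q_+ + q_- - \b) = 0, \qquad \frac{z_-}{z_+} = \frac{\theta + q_+ - \b}{q_+}. \]
The discriminant $(q_- - q_+)^2 - 4\b(q_+ + q_- - \b)$ of this quadratic in $\theta$ vanishes precisely at $\b = \b_{\rm c}$, so for $\b \le \b_{\rm c}$ there are two positive real roots $\theta_\pm$ giving positive weight-ratios, while for $\b$ just above $\b_{\rm c}$ the roots are a complex-conjugate pair and no non-negative martingale of this form exists.

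For Part~(b), I would take $\theta = \theta_+$ (the larger root), normalise $z_+ = 1$, and apply optional stopping to $M$ at $\tau \Def \inf\{t : X_u(t) < 0 \text{ for some alive } u\}$. Continuity of the particle motion at speeds $\pm 1$ forces the first-crossing particle to sit at position $0$ with type $-$ at time $\tau$, contributing exactly $z_-$ to $M_\tau$; other particles contribute non-negatively, so $M_\tau \ge z_-$ on $\{\tau < \infty\}$. Since $M$ is a non-negative martingale with $M_0 = 1$, Fatou applied to the bounded stopping times $\tau \wedge t$ yields $E[M_\tau \mathbf 1_{\tau < \infty}] \le 1$, hence $z_-\,\oP(\tau < \infty) \le 1$. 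A short calculation gives $1/z_- = \sqrt{q_+/q_-}$ at $\b = \b_{\rm c}$ and $1/z_- < \sqrt{q_+/q_-}$ for $0 \le \b < \b_{\rm c}$, establishing the bound claimed in~(b).

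For Part~(a), with no real martingale of this form available, I would instead work with the ODE system
\[ g'(x) = K_+ g - q_+ h - \b g^2, \qquad h'(x) = -K_- h + q_- g + \b h^2, \]
derived by conditioning on the first event of the starting particle; here $g(x), h(x)$ are the probabilities of no descendant ever visiting $(-\infty, 0)$, starting respectively from type $+$ or $-$ at position $x$, with boundary condition $h(0) = 0$. The only fixed points in $[0,1]^2$ are $(0,0)$ and $(1,1)$, and the characteristic polynomial of the linearisation at $(1,1)$ coincides with the $\theta$-quadratic. For $\b$ slightly exceeding $\b_{\rm c}$ the eigenvalues are complex with negative real part, so $(1,1)$ is a stable focus; any non-trivial trajectory heading for $(1,1)$ must spiral and thereby exit $[0,1]^2$, violating the probabilistic interpretation. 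Hence the only admissible solution is $g \equiv h \equiv 0$, forcing $g(0) = 0$; monotonicity of $g(0)$ in $\b$ via the obvious coupling (extra branchings can only help reach $(-\infty, 0)$) then extends this to every $\b > \b_{\rm c}$. The main remaining obstacle is upgrading ``some particle visits $(-\infty, 0)$ almost surely'' to ``infinitely many do almost surely'': I would fix an infinite spine $u_0, u_1, u_2, \ldots$ in the Yule tree of branching events, and note that at the $n$-th branching the off-spine daughter $w_n$ starts a subtree disjoint from all others; since $g(x) = 0$ at every $x$, the strong Markov property at $w_n$'s birth yields almost surely a descendant of $w_n$ visiting $(-\infty, 0)$, and disjointness makes these visitors distinct for distinct $n$.
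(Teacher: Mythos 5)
Your proposal is correct, and part (b) takes a genuinely different route from the paper. The paper gets the bound $\oP^+[N^-(0)\ge 1]\le\sqrt{q_+/q_-}$ by showing that the probabilistic curve $x=H^{+-}(y)$ must approach $(1,1)$ along the unique eigenvector of the linearisation at criticality, so $(H^{+-})'(1)\le 1/m=\sqrt{q_+/q_-}$; it then needs a separate nested-coupling/monotone-convergence argument to transfer the expectation bound from $\beta<\beta_{\rm c}$ to $\beta=\beta_{\rm c}$, and finally applies Markov's inequality. Your additive martingale $\sum_u e^{-\theta X_u(t)}z_{\xi_u(t)}$ is the exponential shadow of the same linear algebra — your $\theta$-quadratic is exactly the characteristic polynomial of the paper's linearisation matrix at $(1,1)$ (up to the sign of the linear term, since you wrote the $\varphi$-reversed system), and $(z_+,z_-)$ is the corresponding eigenvector — but optional stopping at the first passage below $0$ delivers the probability bound directly and uniformly for all $\beta\le\beta_{\rm c}$, with no continuity-in-$\beta$ argument and no need to first establish $H^{+-}(1)=1$; the identity $M_\tau\ge z_-$ is clean precisely because motion is at speed $\pm1$, so the first crosser sits at $0$ with type $-$. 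Your verification that $z_-\ge\sqrt{q_-/q_+}$ for all $\beta\le\beta_{\rm c}$, with equality at criticality, is correct. For part (a) you are essentially reproducing the paper's argument: your $(g,h)$ system is the paper's dynamical system at $\theta=0$ (the solution $(b(\varphi),c(\varphi))$ of its Subsection 4.6), and the complex-eigenvalue/spiral exclusion is the paper's proof of its supercritical lemma, including the same restriction to $\beta$ just above $\beta_{\rm c}$ followed by monotone coupling in $\beta$. Two small points there: the step from ``no interior equilibria and $(1,1)$ is a focus'' to ``$g\equiv h\equiv 0$'' tacitly assumes the bounded trajectory converges to a fixed point, which you should justify by the probabilistic monotonicity of $g$ and $h$ in $x$ (the paper is equally informal, calling this ``topologically obvious''); and your spine argument for upgrading ``at least one'' to ``infinitely many'' particles crossing is a welcome addition — the paper leaves that step implicit.
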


Large-deviation theory (of which the only bit we need is proved
here) allows one to prove easily that if $\b<\b_{\rm c}$, then,
almost surely, only a finite number of particles are ever to the
left of $0$.

The interplay between the Probability and the ODE theory is what
is most interesting.  We shall see that $\b_{\rm c}$ plays the
r\^ole of a critical parameter in several ways, some
probabilistic, some geometric.  The `balance' which occurs when
$\b=\b_{\rm c}$ is rather remarkable.

The paper poses a tantalizing problem which I cannot yet solve.

\section{Wiener--Hopferization}
\subsection{The processes $\{N^{\pm}(\f): \f\ge 0\}$}
For any particle $i$ alive at time $t$, we define $\Phi_i(t)$ to
be its position on the real line at time $t$, and we extend the
definition of $\Phi_i$ by saying that at any time $s$ before that
particle's birth, $\Phi_i(s)$ is the position of its unique
ancestor alive at time $s$.

So far, so sane!  But we are now going to Wiener--Hopferize
everything with a rather clumsy definition which defines for each
$\f\ge 0$ two subsets, $S^+(\f,\b)$ and $S^-(\f,\b)$, of
particles.

We put particle $i$ in set $S^+(\f,\b)$ if there is some $t$ in
$[B(i),D(i))$ where $B(i)$ and $D(i)$ are, respectively, the times
of birth and death of particle $i$, such that
\begin{itemize}
\item $\Phi_i(t)=\f$,
\item $\Phi_i(t)\ge \max\{\Phi_i(s):s\le t\}$, and
\item $\Phi_i$ grows to the right of $t$ in that, for $\e>0$, there
exists a $\d$ with $0<\d<\e$ such that $\Phi_i(\cdot)>\f$
throughout $(t,t+\d)$.
\end{itemize}

At the risk of labouring things, let me describe $S^-(\f,\b)$ for
$\f\ge 0$. We put particle $i$ in set $S^-(\f,\b)$ if there is
some $t$ in $[B(i),D(i))$ such that $\Phi_i(t)=-\f$,
$\Phi_i(t)\le \min\{\Phi_i(s):s\le t\}$, and $\Phi_i$ decreases to
the right of $t$ in that, for $\e>0$, there exists a $\d$ with
$0<\d<\e$ such that $\Phi_i(\cdot)<-\f$ throughout $(t,t+\d)$.

Of course, there may be particles not in
 $\bigcup_{\f\ge 0} \l\{S^+(\f,\b)\cup S^-(\f,\b)\r\}$.

We define $N^+(\f,\b)$ [respectively, $N^-(\f,\b)$] to be the
number of particles in $S^+(\f,\b)$ [resp., $S^-(\f,\b)$].

We let $\oP^+_{\b}$ [respectively, $\oP^-_{\b}$] be the
probability law of our model when it starts with $1$ particle of
type $+$ [resp., $-$] at position $0$ at time $0$; and we let
$\oE^+_{\b}$ [resp., $\oE^-_{\b}$] be the associated expectation.

\emph{We often suppress the `$\b$' in the notation for
$\oP^{\pm}_{\b}$, $\oE^{\pm}_{\b}$, $S^{\pm}_{\b}$,
$N^{\pm}_{\b}$.}

Then, under $\oP^+$,  $N^+ = \{N^+(\f): \f\ge 0\}$ is a standard
branching process, in which a particle dies at rate $K_+$ and is
replaced at the `$\Phi$-time' of its death by a random
non-negative number, possibly $1$ and possibly $\infty$, of
children, the numbers of children being independent, identically
distributed random variables.  I take this as intuitively obvious,
and I am not going to ruin the paper by spelling out a proof.

Note that in the $\oP^-$ branching process $N^- = \{N^-(\f): \f\ge
0\}$, a particle may die without giving birth.

For $0\le \t < 1$, define
\[
   g^{++}(\f,\t) := \oE^+ \t^{N^+(\f)}, \quad
   h^{-+}(\f,\t) := \oE^- \t^{N^+(\f)}.
\]
Clearly, for $0\le \t < 1$,
\begin{align*}
   h^{-+}(\f,\t) &\Eq
   \oE^{-}\oE^{-}\bigl[\t^{N^+(\f)}\  \big|\ N^+(0)\bigr]
   \Eq \oE^- g^{++}(\f,\t)^{N^+(0)}
\\
   &\Eq H^{-+}\l(g^{++}(\f,\t)\r),
\end{align*}
where
\[
   H^{-+}(\t) = \oE^-\t^{N^+(0)}
   = \sum h^{-+}_n \t^n,
\]
where
\[
  h^{-+}_n := \oP^{-}[N^+(0)=n].
\]
It may well be that $h^{-+}_{\infty} := \oP^-[N^+(0)=\infty] >0$.
Note that
\[
    H^{-+}(1-) \Def \lim_{\t\uparrow 1}H^{-+}(\t)
    \Eq \oP^-[N^+(0)< \infty].
\]

\subsection{The dynamical system}
We now take $\t$ in $(0,1)$ and derive the backward differential
equations for
\[
   x(\f) := g^{++}(\f,\t),\quad y(\f) := h^{-+}(\f,\t),
\]
in the good old way in which we teach Applied Probability, and
then study the equations.  \cite{WNL} looks a bit more `rigorous'
here.

Consideration of what happens between times $0$ and $\rd t$ tells
us that
\[
  x(\f+\rd \f) = \{1 - K_+\,\rd \f\}x(\f) + \{q_+\,\rd \f\}y(\f) +
    \{\b\,\rd \f\} x(\f)^2 + \mathrm{o}(\rd\f).
\]
The point here is of course that if we started with $2$ particles
in the $+$ state, then $\oE\t^{N^+(\f)}=x(\f)^2$. We see that,
with $x'$ meaning $x'(\f)$,
\begin{subequations}
\label{DynSys}
\begin{equation}\label{DynSysa}
  x' = q_+(y - x) + \b(x^2 - x).
\end{equation}
Similarly, remembering that $\Phi$ starts to run backwards when
the particle starts in state $-$, we find that
\[
  y(\f - \rd\f) = \{1 - K_-\,\rd \f\}y(\f) +  \{q_-\,\rd \f\}x(\f) +
    \{\b\,\rd \f\} y(\f)^2 + \mathrm{o}(\rd\f),
\]
whence
\begin{equation}\label{DynSysb}
   -y' = q_-(x - y) + \b(y^2 - y).
\end{equation}
\end{subequations}
Of course, $y=H^{-+}(x)$ must represent the track of an integral
curve of the \emph{dynamical system} \eqref{DynSys}, and since $y'
= {H^{-+}}'(x)x'$, we have an \emph{autonomous equation for
$H^{-+}$} which we shall utilize below.

Note that the symmetry of the situation shows that $x = H^{+-}(y)$
must also represent the track of an integral curve of our
dynamical system, though one traversed in the `$\f$-reversed'
direction.

Probability Theory guarantees the existence of the `probabilistic
solutions' of the dynamical system tracking curves $y=H^{-+}(x)$
and $x=H^{+-}(y)$.

\begin{lemma}\label{NoEquil}
There can be no equilibria of our dynamical system in the interior
of the unit square.
\end{lemma}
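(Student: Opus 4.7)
The plan is to set both derivatives equal to zero and derive a sign contradiction for points in $(0,1)^2$. An equilibrium $(x,y)$ of \eqref{DynSys} must satisfy
\[
  q_+(y-x) + \b(x^2 - x) \Eq 0
  \quad\text{and}\quad
  q_-(x-y) + \b(y^2 - y) \Eq 0.
\]
Rearranging the first equation gives $y - x = \b x(1-x)/q_+$, and rearranging the second gives $x - y = \b y(1-y)/q_-$.

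Now I would simply read off the signs. If $(x,y)$ lies in the open unit square, then $x(1-x)>0$ and $y(1-y)>0$, while $\b, q_\pm > 0$ by \eqref{Assms}. The first identity therefore forces $y > x$, whereas the second forces $x > y$; this is the required contradiction. Hence no equilibrium can occur in the interior.

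There is essentially no obstacle here; the only thing worth remarking is that the boundary is genuinely in play (for example, the corners $(0,0)$ and $(1,1)$ are equilibria, as a quick check of \eqref{DynSysa}--\eqref{DynSysb} shows), so the lemma is sharp and the restriction to the \emph{interior} of the unit square is essential. This is also why the argument must use strict positivity of both $x(1-x)$ and $y(1-y)$, not just non-negativity.
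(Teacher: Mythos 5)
Your proof is correct and follows essentially the same argument as the paper: both rearrange the two equilibrium equations and read off the signs of $y-x$ and $x-y$. The only (immaterial) difference is that you use the strict positivity of $x(1-x)$ and $y(1-y)$ to get an immediate contradiction, whereas the paper first deduces the weak inequalities $y\ge x$ and $x\ge y$, concludes $x=y$, and then notes $x^2-x=0$ forces $(x,y)$ onto the boundary.
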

\begin{proof}
For if $(x,y)$ is in the interior and
\[
  q_+(y-x)+\b(x^2-x) = 0, \quad q_-(x-y)+\b(y^2-y) = 0,
\]
then $y\ge x$ from the first equation and $x\ge y$ from the
second. Hence $x=y$ and $x^2-x = y^2 - y=0$.
\end{proof}

\subsection{Change of $\t$}
We need to think about how a change of $\t$ would affect things.
Suppose that $\a=\oE^+ \t^{N^+(\psi)}$ where $0<\a<1$.  Then
\[
   \oE^+ \oE^+\bigl[\t^{N^+(\f+\psi)}\ \big| \ N^+(\f)\bigr]
   \Eq
   \oE^+ \a^{N^+(\f)}
   \Eq
   g^{++}(\f,\a),
\]
where $\a = g^{++}(\psi,\t)$.  So, we have the
\emph{probabilistic-flow relation}
\beqn{probFlow}
  g^{++}(\f+\psi,\t) \Eq g^{++}\l(\f,g^{++}(\psi,\t)\r).
\ee
Likewise, $h^{-+}(\f+\psi,\t) \Eq h^{-+}\l(\f,g^{++}(\psi,\t)\r)$.
Thus, changing from $\t$ to $\a=\oE^+ \t^{N^+(\psi)}$ just changes
the starting-point of the motion along the probabilistic curve
from $(\t,H^{-+}(\t))$ to the point $(\a,H^{-+}(\a))$ still on the
probabilistic curve.  This is why we may sometimes seem not to
care about $\t$, and why it is not in our notation for $x(\f)$,
$y(\f)$.  But we shall discuss $\t$ when necessary, and the
extreme values $0$ and $1$ of $\t$ in Subsection \ref{01}.

If for \emph{any} starting point $\bv_0=(x_0,y_0)$ within the unit
square, we write $\bV(\f,\bv_0)$ for the value of $(x(\f),y(\f))$,
then, for values of $\f$ and $\psi$ in which we are interested, we
have (granted existence and uniqueness theorems) the
\emph{ODE-flow relation}
\[
   \bV(\f+\psi,\bv_0) \Eq \bV(\f, \bV(\psi,\bv_0))
\]
which generalizes \eqref{probFlow}. (The possibility of explosions
need not concern us: we are interested only in what happens within
the unit square.)  For background on ODE flows, see \cite{Arn}.

\section{How does ODE theory see the phase transition?}
\subsection{An existence theorem}
Even if you skip the (actually quite interesting!) proof of the
following theorem, do not skip the discussion of the result which
makes up the next subsection.

\begin{theorem}\label{Aexists}
There exist constants $\{a_n: n\ge 0\}$ with $a_0=0$, all other
$a_n$ strictly positive, and
\beqn{Ares}
    \sum a_n \le q_+/(q_- + \b),
\ee
and a solution $(x(\f),y(\f))$ of the `$\f$-reversed' dynamical
system
\[
   -x' =   q_+(y - x) + \b(x^2 - x),
   \qquad
   y' =    q_-(x - y) + \b(y^2 - y),
\]
such that $x(\f) = A(y(\f))$, where we now write $A(y) = \sum a_n
y^n$.
\end{theorem}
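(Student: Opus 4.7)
The strategy is to convert the two-equation system to a single autonomous ODE for $A$, determine its coefficients by matching powers of $y$, prove positivity by induction on the resulting recursion, and extract the bound by evaluating at $y = 1$. The hard part will be justifying convergence of the power series up to $y=1$.

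Along an integral curve of the $\f$-reversed system with $x = A(y)$, $A'(y) = x'/y'$ gives the single ODE
\[
A'(y)\bigl[q_-(A-y) + \b y(y-1)\bigr] \Eq q_+(A-y) + \b A(1-A).
\]
Matching the coefficient of $y^1$ in $A(y) = \sum_{n \ge 1} a_n y^n$ yields the quadratic $q_- a_1^2 - (K_+ + K_-) a_1 + q_+ = 0$; both roots are positive (product $q_+/q_-$, sum $(K_++K_-)/q_-$), and evaluation at $a_1 = q_+/K_-$ returns $-q_+\b(q_++q_-+\b)/K_-^2 < 0$, so the smaller root $a_1$ lies in $(0,\ q_+/K_-)$. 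Matching the coefficient of $y^n$ for $n \ge 2$ gives
\[
a_n\bigl[K_+ + n(K_- - q_- a_1)\bigr] \Eq (n-1)a_{n-1}(q_- a_2 + \b) + q_-\sum_{m=1}^{n-2} m\, a_m a_{n-m+1} + \b\sum_{j=1}^{n-1} a_j a_{n-j};
\]
since $a_1 < 1 < K_-/q_-$ the bracket is strictly positive, so strong induction forces $a_n > 0$ for every $n \ge 1$.

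To extract the bound I would write $A(y) = y P(y)$ with $P(y) = \sum_{n \ge 1} a_n y^{n-1}$, so $A(1) = P(1)$ and $A'(1) = P(1) + P'(1)$. Setting $\sigma := A(1)$ and supposing $\sigma \ne 1$, the ODE at $y = 1$ reads $q_- A'(1)(\sigma - 1) = (\sigma - 1)(q_+ - \b\sigma)$; dividing by $\sigma - 1$ and substituting $A'(1) = \sigma + P'(1)$ gives $q_- P'(1) = q_+ - K_-\sigma$. Because every $a_n > 0$, $P'(1) = \sum_{n\ge 2}(n-1)a_n \ge 0$, which forces $\sigma \le q_+/K_- = q_+/(q_-+\b)$, and incidentally confirms $\sigma \ne 1$ a posteriori.

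The principal obstacle is justifying that $\sum a_n$ actually converges (so that the manipulation at $y = 1$ is legitimate) with $A(1) < 1$. I would tackle this via the dynamical-system picture: the origin is a hyperbolic saddle of the reversed system (the Jacobian $\bigl(\begin{smallmatrix} K_+ & -q_+ \\ q_- & -K_- \end{smallmatrix}\bigr)$ has determinant $-\b(q_++q_-+\b) < 0$), so the analytic unstable-manifold theorem yields a local analytic curve $x = A(y)$ near $y = 0$ whose Taylor coefficients match the $a_n$ above. To prolong this curve all the way to $y = 1$ I would construct a forward-invariant region inside the open unit square---using Lemma~\ref{NoEquil} to rule out any interior equilibrium---that traps the orbit until it meets the line $y=1$. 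Once $A$ is analytic on $[0,1)$ and bounded, Abel's theorem for series with non-negative coefficients identifies $\sum a_n$ with $\lim_{y\to 1^-}A(y)$, and the previous paragraph delivers $\sum a_n \le q_+/(q_- + \b)$.
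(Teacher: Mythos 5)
Your formal work --- the autonomous ODE for $A$, the quadratic for $a_1$ with the smaller root, the recurrence for $n\ge 2$ and positivity by induction --- matches the paper (one small slip: as you have written the recurrence, the $m=1$ term of your $q_-$-sum is $q_-a_1a_n$, so the unknown $a_n$ sits on both sides; you must move it to the left and check that $K_++nK_--(n+1)q_-a_1>0$, which does hold since $a_1<q_+/K_-$). The genuine gaps are in the second half. First, your evaluation of the ODE at $y=1$ presupposes both that $\sum a_n$ and $\sum na_n$ converge and that $\sigma:=A(1)\ne 1$, and your treatment of the latter is circular: you divide by $\sigma-1$ to derive $\sigma\le q_+/K_-$ and then cite that conclusion as confirmation that $\sigma\ne 1$. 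This cannot be waved away, because when $\b\le\b_{\rm c}$ there genuinely are integral curves running from the bottom edge of the square into the equilibrium $(1,1)$ (the curve $x=H^{+-}(y)$ is one), so ``the orbit tangent to the $a_1$-eigenvector ends up at $(1,1)$ with $\sigma=1$'' is a live possibility that must be excluded by an argument, not by fiat. Second, the dynamical-systems route you propose for convergence is only a gesture: the ``forward-invariant region that traps the orbit until it meets $y=1$'' is precisely where all the content lies, and Lemma \ref{NoEquil} alone does not produce one. (A further sign the details were not checked: the eigenvector with slope $\rd x/\rd y=a_1$ at the origin belongs to the \emph{negative} eigenvalue of the reversed system's Jacobian, so your curve is the stable, not unstable, manifold of the reversed flow at $(0,0)$; the invariant-manifold theorem still applies, but the orientation matters when you try to set up the trapping region.)

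The paper's device, which you should compare with, dissolves both difficulties at once by never touching the full series until the end. It works with the partial sums $A_N(y)=\sum_{n\le N}a_ny^n$ and observes that the two sides of the coefficient identity, truncated at order $N$, differ only in high-order terms of known sign, giving the one-sided inequality \eqref{ANineq} for every $y\in(0,1)$. A short contradiction argument with that inequality shows $A_N(y)<y$ on $(0,1)$, hence $A_N(1)<1$; letting $y\uparrow 1$ in \eqref{ANineq} and using $A_N'(1)\ge A_N(1)$ then yields $(1-A_N(1))\{q_+-(\b+q_-)A_N(1)\}\ge 0$, whence $(\b+q_-)A_N(1)\le q_+$ for every $N$. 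The bound \eqref{Ares}, the convergence of $\sum a_n$, and the exclusion of $A(1)=1$ all follow simultaneously by letting $N\to\infty$; the existence of the solution with $x=A(y)$ is then read off from the scalar equation $y'=q_-[A(y)-y]+\b[y^2-y]$ (which the paper interprets via a classical branching process). Your limiting identity $q_-P'(1)=q_+-K_-\sigma$ is a nice a posteriori check, but as a proof it needs the partial-sum (or some equivalent) scaffolding underneath it.
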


\begin{proof}
Assume that constants $a_n$ as described exist. Since $x'(\f) =
A'(y(\f))y'(\f)$, we have
\[
  - \l\{q_+y + \b A(y)^2 - K_+ A(y)\r\}
  = A'(y)\l\{q_- A(y) + \b y^2 - K_-y\r\}.
\]
Comparing coefficients of $y^0$,
\[
   \b a_0^2 - K_+a_0 = -a_1 q_- a_0,
\]
and we are guaranteeing this by taking $a_0=0$.  Comparing
coefficients of $y^1$, we obtain
\[
   q_- a_1^2 - (K_- + K_+)a_1 + q_+ = 0.
\]
We take
\begin{align*}
   a_1 &\Eq \frac{K_- + K_+ - \sqrt{(K_- + K_+)^2 - 4q_-q_+}}
       {2q_-}
\\
   &\Eq
   \frac{2q_+}
   {K_- + K_+ + \sqrt{(K_- + K_+)^2 - 4q_-q_+}}
\end{align*}
from which it is obvious that $0<a_1<1$.

On comparing coefficients of $y^n$ we find that, for $n\ge 2$,
\begin{align*}
  &\bigl\{K_+ + nK_{-}  - (n+1){q_-}a_1\bigr\}a_n\\
  &\Eq \b\sum_{k=1}^{n-1} a_k a_{n-k}
  + q_- \sum_{k=1}^{n-2}(k+1)a_{k+1}a_{n-k} + \b(n-1)a_{n-1}.
\end{align*}
We now consider the $a_n$ as being \emph{defined} by these
recurrence relations (and the values of $a_0$ and $a_1$). It is
clear that the $a_n$ are all positive.

Temporarily fix $N>2$, and define
\begin{align*}
   A_N(y) &\Def \sum_{n=0}^N a_n y^n,
\\
   L(y) &\Def \sum _{n=0}^N \ell_n y^n
   \Def -q_+y - \b A_N(y)^2 + K_+A_N(y),
\\
   R(y) & \Def \sum r_n y^n
   \Def A'_N(y)\bigl\{q_-A_N(y) + \b y^2 - K_-y\bigr\}.
\end{align*}
For $n\le N$ we have $\ell_n = r_n$ by the recurrence relations.
It is clear that for $n>N$ we have $\ell_n\le 0$ and $r_n\ge 0$.
Hence, for all $y$ in $(0,1)$,
\beqn{ANineq}
   -q_+y - \b A_N(y)^2 + K_+A_N(y)
   \Le
   A'_N(y)\bigl\{q_- A_N(y) + \b y^2 - K_-y\bigr\}.
\ee
Suppose for the purpose of contradiction that there exists $y_0$
in $(0,1)$ with $A_N(y_0)=y_0$. Then
\[
   -q_+ - \b y_0 + K_+ \le A'_N(y_0)\bigl\{q_- + \b y_0 - K_-\bigr\}.
\]
However, the left-hand side is positive while the right-hand side
is negative.

Because $A_N(0)=0$ and $A'_N(0) = a_1 < 1$, the contradiction
establishes that $A_N(y)<y$ for $y\in (0,1)$, so that $A_N(1)\le
1$. Since this is true for every $N$, and each $a_n$ ($n>1$) is
strictly positive, we have $A_N(1)<1$ for every $N$.

By inequality \eqref{ANineq}, we have
\[
   D_N q_- (A_N - 1) + q_+ + \b A_N^2 - K_+A_N \Ge 0,
\]
where $A_N:=A_N(1)<1$ and $D_N:= A_N'(1)$. Because each $a_n$
($n>0$) is positive it is clear that $A_N<D_N$.  We therefore have
\[
  q_+ + \b A_N^2 - K_+ A_N \ge D_N q_- (1-A_N) \ge q_- A_N(1-A_N),
\]
which simplifies to
\[
   (1-A_N)\{q_+ - (\b + q_-)A_N\}\ge 0.
\]
Since $(1-A_N)>0$, we have $(\b + q_-)A_N \le q_+$, and result
\eqref{Ares} follows.

It is clear that we now need to consider the autonomous equation
for $y=y(\f)$:
\[
  y' \Eq q_-[A(y)-y] + \b[y^2-y], \qquad y(0)=\t.
\]
But we can describe $y(\f)$ as $\oE \t^{Z(\f)}$ where
$\{Z(\f):\f\ge 0\}$ is a classical branching process in which
(with the usual independence properties) a particle dies at rate
$K_-$ and at the moment of its death gives birth to $C$ children
where
\[
    \oP(C=n) = \begin{cases}
    q_- a_n /K_- &\mbox{if $1\le n \le \infty$ and $n\ne 2$},\\
    (\b + q_- a_2)/K_- &\mbox{if $n=2$}.
    \end{cases}
\]
Of course, $a_{\infty} = 1 - \sum a_n$.

Then $(x(\f), y(\f)) = (A(y(\f)), y(\f))$ describes the desired
solution starting from $(A(\t),\t)$.
\end{proof}

\subsection{Important discussion}

Of course, ODE theory cannot see what we shall see later: namely
that $A(\cdot)=H^{+-}(\cdot)$ when $\b>\b_{\rm c}$ but
$A(\cdot)\ne H^{+-}(\cdot)$ when $\b\le\b_{\rm c}$.  When
$\b>\b_{\rm c}$, the curve $x = H^{+-}(y)$ is the steep bold curve
$x = A(y)$ at the \emph{left}-hand side of the picture as in
Figure \ref{Super}. But when $\b \le \b_{\rm c}$, the curve $x =
H^{+-}(y)$ is the steep bold curve at the \emph{right}-hand side
of the picture as in Figure \ref{crit}.  Ignore the shaded
triangle for now.

What ODE theory must see is that whereas there is only one
integral curve linking the top and bottom of the unit square when
$\b>\b_{\rm c}$, there are infinitely many such curves when
$\b\le\b_{\rm c}$ of which two, the curves $x = A(y)$ and $x =
H^{+-}(y)$, derive from probability generating functions (pgfs).

It does not seem at all easy to prove by Analysis that, when
$\b\le \b_{\rm c}$, there is an
integral curve linking the bottom
of the unit square to the point $(1,1)$, of the form $x = F(y)$
where $F$ is the pgf of a random variable which can perhaps take
the value $\infty$.  Methods such as that used to prove Theorem
\ref{Aexists} will not work.

Moreover, it is not easy to compute $H^{+-}(0)$ when $\b$ is equal
to, or close to, $\b_{\rm c}$.  If for example, $q_+=1$, $q_-=4$
and $\b=0.4$, then one can be certain that $H^{+-}(0)=0.6182$ to
$4$ places, and indeed one can easily calculate it to arbitrary
accuracy. But the critical nature of $\b_{\rm c}$ shows itself in
unstable behaviour of some na\"\i ve computer programs when $\b$
is equal to, or close to, $\b_{\rm c}$.  I believe that in the
critical case when $q_+=1$, $q_-=4$ and $\b=0.5$, $H^{+-}(0)$ is
just above $0.6290$.

\emph{Mathematica} is understandably extremely cautious in regard
to the non-linear dynamical system \eqref{DynSys}, and drives one
crazy with warnings.  If forced to produce pictures, it can
produce some rather crazy ones, though usually, but not absolutely
always, under protest.  Its pictures can be coaxed to agree with
those in the earlier appetizer which were produced from my own `C'
Runge--Kutta program which yielded Postscript output. Sadly, that
program and lots of others were lost in a computer burn-out before
I backed them up.

\section{Proof of Theorem \ref{phase} and more}
\begin{figure}[ht]
\begin{center}
\includegraphics{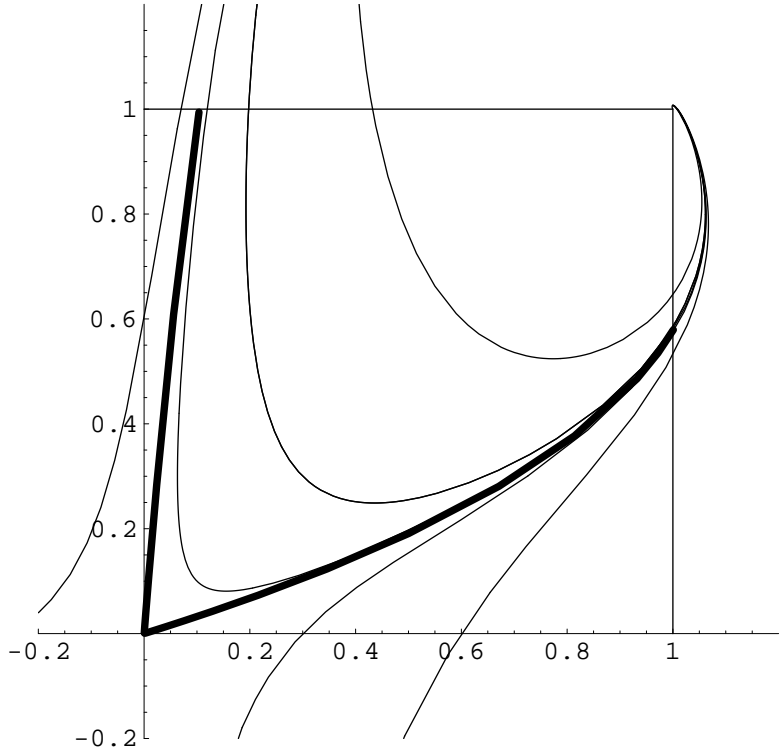}
\end{center}
\caption{\emph{Mathematica} picture of
the supercritical case when $q_+=1$, $q_-=4$, $\b = 4$}
\label{Super}
\end{figure}
\subsection{When $\b>\b_{\rm c}$}
\begin{lemma}
When $\b > \b_{\rm c}$,
\begin{enumerate}[{\rm (a)}]
\item $H^{-+}(0) = \oP^-[N^+(0) = 0] = 0$,
\item $H^{-+}(1-) = \oP^-[N^+(0) < \infty] < 1$,
\item $H^{+-}(1-) = \oP^+[N^-(0) < \infty] < 1$,
\item $H^{+-}(0) = \oP^+[N^-(0) = 0] = 0$.
\end{enumerate}
\end{lemma}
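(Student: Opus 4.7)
The plan is to deduce all four conclusions from one structural fact about the corner $(1,1)$ of the dynamical system \eqref{DynSys}. Linearising there in the coordinates $\hat x = 1-x$, $\hat y = 1-y$, I find trace $q_- - q_+ > 0$ and determinant $\b(q_+ + q_- - \b)$, and the discriminant $(q_- - q_+)^2 - 4\b(q_+ + q_- - \b)$ factors so that it vanishes exactly at $\b_c$ and at $\b_2 := \half(\sqrt{q_+} + \sqrt{q_-})^2$. Consequently, for $\b \in (\b_c, \b_2)$ the point $(1,1)$ is an unstable spiral, while for $\b \ge \b_2$ the eigenvalues are real and a short computation gives each eigenvector slope $v_2/v_1 < 0$ in the $(\hat x,\hat y)$ plane.

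From this I would extract the \emph{key fact}: for every $\b > \b_c$, no trajectory of \eqref{DynSys} whose two coordinates are both monotone in $\f$ and which lies in the closed unit square can have $(1,1)$ as either an $\omega$- or an $\alpha$-limit. In the spiral regime this is because such an approach must wind round $(1,1)$ infinitely often, impossible for the graph of a monotone function; in the real-eigenvalue regime any approach must be tangent to an eigenvector, and each eigenvector points out of the first quadrant of $(\hat x, \hat y)$---exactly the quadrant in which the unit square sits near $(1,1)$.

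Parts (b) and (c) follow immediately, because the probabilistic curves $y=H^{-+}(x)$ and $x=H^{+-}(y)$ are graphs of non-decreasing pgfs, hence monotone integral curves inside $[0,1]^2$, and their $\t\to 1^-$ endpoints are the $\alpha$-limits in the unit square of the underlying trajectories. For part (a) I specialise to the particular trajectory
\[
  (x(\f),y(\f)) \Eq (g^{++}(\f,0),\, h^{-+}(\f,0))
  \Eq (\oP^+[N^+(\f)=0],\, \oP^-[N^+(\f)=0]),\qquad \f\ge 0,
\]
whose coordinates are monotone non-decreasing in $\f$ because $\{N^+(\f)=0\}$ is an increasing event---no upward crossing of level $\f$ precludes any crossing of a higher level. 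Poincar\'e--Bendixson combined with Lemma~\ref{NoEquil} forces the $\omega$-limit to be a boundary equilibrium; the key fact rules out $(1,1)$; so it must be $(0,0)$. Monotone non-decreasing convergence from $(0,H^{-+}(0))$ to $(0,0)$ then forces $y \equiv 0$ along the trajectory, giving $H^{-+}(0)=0$. Part (d) is the identical argument run in the \emph{mirror} dynamical system obtained by the formal swap $q_+\leftrightarrow q_-$, applied to the monotone trajectory $(g^{--}(\f,0),\, h^{+-}(\f,0))$; the determinant, discriminant and eigenvector-slope computations at $(1,1)$ all depend only on the unordered pair $\{q_+, q_-\}$, so the key fact transfers intact even though the sign of the trace flips.

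The main obstacle is the key fact itself. In the spiral regime---the case depicted in Figure~\ref{Super}---it is intuitively immediate, but still needs a clean argument that no monotone curve can have an unstable spiral as limit. In the real-eigenvalue regimes (node for $\b_2 < \b < q_+ + q_-$, saddle for $\b > q_+ + q_-$) one must carry out the explicit negative-slope computation for the eigenvectors together with the routine observation that every smooth integral curve reaching a hyperbolic fixed point does so tangent to some eigendirection.
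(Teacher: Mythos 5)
Your linearisation at $(1,1)$, the trace/determinant/discriminant computation, and the spiral conclusion in the window $\b_{\rm c}<\b<\frac12(\sqrt{q_-}+\sqrt{q_+})^2$ reproduce exactly the paper's central step for (b) and (c). After that the routes diverge. The paper never analyses the real-eigenvalue regimes: it simply declares it enough to prove the lemma in the complex-eigenvalue window, the implicit justification being monotonicity of all four statements in $\b$ (a larger $\b$ only adds particles, as in the nested coupling it builds later), whereas you treat every $\b>\b_{\rm c}$ head-on via the negative eigenvector-slope computation (which is correct: for $\b\ge\frac12(\sqrt{q_-}+\sqrt{q_+})^2$ one has $\sqrt{(2\b-q_+-q_-)^2-4q_+q_-}<2\b-q_+-q_-$, so both eigenvalues lie below $\b-q_+$ and both slopes are negative). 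The paper also treats (a) as genuinely obvious --- along any fixed line of descent $\Phi$ is the additive functional of the two-state chain with positive drift since $q_->q_+$, so it up-crosses level $0$ at a record a.s.\ and $N^+(0)\ge 1$ under either law, for every $\b$ --- so your monotone-trajectory/Poincar\'e--Bendixson detour there is sound but heavy machinery for a one-line fact. For (d) the paper argues topologically from (a)--(c): if $H^{+-}(0)>0$, the integral curve $x=H^{+-}(y)$ would link the bottom edge to the top edge and would have to meet the curve $y=H^{-+}(x)$, which links $(0,0)$ to the right-hand edge; distinct integral curves cannot cross, and Lemma \ref{NoEquil} excludes an interior equilibrium as meeting point. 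Your mirror-system argument for (d), running the monotone $\t=0$ trajectory $(g^{--}(\f,0),h^{+-}(\f,0))$ into the corner classification, is a genuinely different and self-contained alternative; its price is that it needs your ``key fact'' in every regime rather than only in the spiral window.

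That price is where the real gap sits. At $\b=q_++q_-$ the determinant of the linearisation vanishes, $(1,1)$ is not hyperbolic, and the ``routine observation'' you invoke (tangency to an eigendirection at a hyperbolic fixed point) does not apply; your own enumeration ``node for $\b_2<\b<q_++q_-$, saddle for $\b>q_++q_-$'' silently skips this value. The conclusion does survive --- the centre direction has slope $-q_-/q_+<0$, and any trajectory converging to $(1,1)$ in either time direction must be tangent to the centre manifold or to the unstable direction --- but that needs a centre-manifold (or ad hoc) argument, not the hyperbolic tangency lemma. (The repeated-eigenvalue case $\b=\frac12(\sqrt{q_-}+\sqrt{q_+})^2$ is still hyperbolic and is fine.) The cleanest repair is the paper's own device: prove (b)--(d) only for $\b_{\rm c}<\b<\frac12(\sqrt{q_-}+\sqrt{q_+})^2$ and extend to all larger $\b$ by the monotone coupling in $\b$, which eliminates the node/saddle/degenerate analysis altogether. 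On the remaining point you flag --- that a trajectory confined to the unit square cannot converge to $(1,1)$ when the eigenvalues are non-real --- the paper asserts this at exactly the same level of rigour (``must spiral, and cannot remain inside the unit square''), so you are not below its standard there.
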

It is clearly enough to prove the lemma under the assumption
\[
        {\textstyle \frac12}\l(\sqrt{q_-}-\sqrt{q_+}\r)^2 < \b <
        {\textstyle \frac12}\l(\sqrt{q_-}+\sqrt{q_+}\r)^2,
\]
and this is made throughout the proof.
\begin{proof}
Result (a) is obvious.

The point $(1,1)$ is an equilibrium point of our dynamical system,
and we consider the linearization of the system near this
equilibrium.  We put $x=1+\xi$, $y=1 + \eta$ and linearize by
ignoring terms in $\xi^2$ and $\eta^2$:
\[
  \mat{\xi'\\ \eta'}
  \Eq
  \mat{-q_+ + \b & q_+ \\
        -q_-  &q_- - \b}
   \mat{\xi\\ \eta},
\]
the matrix being the linearization matrix of our system at
$(1,1)$. The characteristic equation for the eigenvalues of this
matrix is
\[
   \lm^2 + (q_+ - q_-)\lm + (q_- + q_+)\b - \b^2 = 0.
\]
The discriminant `$B^2 - 4AC$' is
\[
   \{2\b - (q_+ + q_-)\}^2 - 4q_-q_+.
\]
This expression is zero if $\b = \frac12(\sqrt{q_-} \pm
\sqrt{q_+})^2$. So, in our case, the eigenvalues $\lm$ have
non-zero imaginary parts.  Any solution of our system converging
to $(1,1)$ as $\f\to\pm\infty$ must spiral, and cannot remain
inside the unit square. Hence results (b) and (c) hold.

It is now topologically obvious (since there are no equilibria
inside the unit square) that we must have $\oP^+[N^-(0)=0]=0$;
otherwise how could the curve $x = H^{+-}(y)$ link the top and
bottom edges of the square?  Thus result (d) holds.
\end{proof}

Of course, we can now deduce from result \eqref{Ares} that (when
$\b>\b_{\rm c}$)
\[
   H^{+-}(1-) = \oP^+[N^-(0)<\infty] \le q_+/(q_- + \b).
\]

Figure \ref{Super}, which required `cooking' beyond choosing
different $\f$-ranges for different curves, represents the case
when $\ q_+=1$, $q_-=4$ and $\b=4$. The lower bold curve
represents $y = H^{-+}(x)$ and the upper $x = H^{+-}(y)$.  As
mentioned previously, $H^{+-}(y) = A(y)$ where $A$ is the function
of Theorem \ref{Aexists}.

The motion along the lower probabilistic curve $y=H^{-+}(x)$ will
start at $(\t,H^{-+}(\t))$ and move towards $(0,0)$ converging to
$(0,0)$ as $\f\to \infty$ since $N^{\pm}(\f)\to\infty$.  If we fix
$\f$ and let $\t\to 1$, we move along the curve towards the point
$(1,H(1-))$.  Of course, we could alternatively leave $\t$ fixed
and run $\f$ backwards.  It is clear because of the spiralling
around $(1,1)$ that the power series $H^{-+}(x)$ must have a
singularity at some point $x$ not far to the right of $1$.

Motion of the dynamical system along the steep probabilistic curve
$x=H^{+-}(y)$ on the left of the picture will be upwards because
it is the $\f$-reversal of the natural probabilistic motion. Now
you understand the sweep of the curves in the top-right of the
picture.

\subsection{A simple large-deviation result}\label{XPhi}
Let $\{X(t): t\ge 0\}$ be a Markov chain on $\{+,-\}$ with
$Q$-matrix
\[
    Q = \mat{-q_+&q_+\\
         q_-& -q_-
         }.
\]
Let $V$ be the function on $\{+,-\}$ with $V(+)=1$ and $V(-)=-1$
and define $\Phi_X(t) = \int_0^t V(X_s)\,\rd s$. Almost surely,
$\Phi_X(t)\to\infty$.  We stay in `dynamical-system mode' to
obtain the appropriate Feynman--Kac formula.

Let $\mu>0$, and define (with the obvious meanings of $\oE^{\pm}$)
\[
   u(t) = \oE^+\exp\{-\mu\Phi_X(t)\}, \quad
   v(t) = \oE^-\exp\{-\mu\Phi_X(t)\}.
\]
Then
\[
  u(t+\rd t) \Eq \{1 - q_+ \,\rd t\}e^{-\mu\, \rd t} u(t) + q_+\,\rd t\; v(t)
  + {\rm o}(\rd t),
\]
with a similar equation for $v$. We find that
\[
  \mat{u'\\ v'}
  = \mat{-q_+ - \mu & q_+ \\
  q_- & -q_- + \mu}
  \mat{u\\v},
  \quad
  \mat{u(t)\\v(t)}
  = \exp\{t(Q-\mu V)\}\mat{1\\1},
\]
where $V$ also denotes the operator
$\bigl(\begin{smallmatrix}1&0\\\noalign{\vskip2pt}0&-1\end{smallmatrix}\bigr)$
of multiplication by the function $V$.

\begin{lemma}\label{FKlem}
If $\b<\b_{\rm c} := \frac12\l(\sqrt{q_-} - \sqrt{q_+}\r)^2$, then
there exist positive constants $\e$, $\kp$, $A$ such that
\[
  \re^{\b t}\oP^{\pm}[\Phi_X(t)\le \e t] \le A\re^{-\kp t}.
\]
\end{lemma}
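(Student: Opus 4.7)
The strategy is Chernoff's exponential-tilting inequality applied to the Feynman--Kac representation for $(u(t),v(t))$ derived just above the lemma. For any $\mu>0$, Markov's inequality gives
\[
  \oP^{\pm}[\Phi_X(t)\le \e t] \Le \re^{\mu\e t}\,\oE^{\pm}\re^{-\mu\Phi_X(t)}
  \Le \re^{\mu\e t}\bigl(u(t)+v(t)\bigr),
\]
so after multiplying through by $\re^{\b t}$ it suffices to exhibit $\mu>0$ such that the exponential growth-rate of $u(t)+v(t)$, together with $\b+\mu\e$, is strictly negative for some $\e>0$.

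Since $(u(t),v(t))^T = \exp\{t(Q-\mu V)\}(1,1)^T$ and the $2\times 2$ matrix $M(\mu):=Q-\mu V$ has strictly positive off-diagonal entries $q_{\pm}$, its largest eigenvalue $\lm_{\max}(\mu)$ is real and simple (by Perron--Frobenius for Metzler matrices, or by direct diagonalization of a $2\times 2$ matrix), with a strictly positive right eigenvector. Since $(1,1)^T$ is dominated by a constant multiple of that eigenvector, we obtain $u(t)+v(t)\le C(\mu)\re^{t\lm_{\max}(\mu)}$. A short computation of the characteristic polynomial yields
\[
  \lm_{\max}(\mu) \Eq \half\bigl\{-(q_++q_-) + \sqrt{(2\mu-(q_--q_+))^2 + 4q_+q_-}\bigr\},
\]
where I have grouped the discriminant so as to expose the manifest lower bound $4q_+q_-$; in particular the eigenvalues are real for every $\mu$.

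The right-hand side is plainly minimized at $\mu^* := \half(q_--q_+)>0$, at which the discriminant equals exactly $4q_+q_-$, whence
\[
  \lm_{\max}(\mu^*) \Eq \half\bigl\{-(q_++q_-) + 2\sqrt{q_+q_-}\bigr\}
  \Eq -\half(\sqrt{q_-}-\sqrt{q_+})^2 \Eq -\b_{\rm c}.
\]
This is the heart of the matter: the constant $\b_{\rm c}$ emerges naturally as $-\min_{\mu>0}\lm_{\max}(\mu)$. Given the hypothesis $\b<\b_{\rm c}$, fix $\mu=\mu^*$ and then choose $\e>0$ so small that $\kp := \b_{\rm c}-\b-\mu^*\e > 0$. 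Combining the inequalities produces $\re^{\b t}\oP^{\pm}[\Phi_X(t)\le\e t] \le A\re^{-\kp t}$ with $A=C(\mu^*)$, as required.

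The only step requiring genuine care is the matrix-exponential estimate $u(t)+v(t)\le C\re^{t\lm_{\max}(\mu)}$; fortunately $M(\mu)$ is $2\times 2$ with a discriminant bounded away from zero, so one can just compute the exponential explicitly and sidestep any general spectral theory. The calculation identifying $\min_\mu \lm_{\max}(\mu) = -\b_{\rm c}$ is the substantive observation; everything else is bookkeeping.
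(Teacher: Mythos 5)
Your proof is correct and follows essentially the same route as the paper: both apply the exponential Chebyshev/Markov bound to the Feynman--Kac representation $\exp\{t(Q-\mu V)\}$, identify the top eigenvalue $\g(\mu)$, minimize it at $\mu^*=\frac12(q_--q_+)$ to get $-\b_{\rm c}$, and then take $\e$ small enough that $\kp=\b_{\rm c}-\b-\mu^*\e>0$. Your extra care over the bound $u(t)+v(t)\le C\re^{t\lm_{\max}}$ merely makes explicit what the paper absorbs into the constant $A_{\e}$.
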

\begin{proof}
We have just shown that
\[
  \oE^{\pm}[\re^{-\mu\Phi_X(t)}f(X_t)] \Eq
  \exp\{t(Q-\mu V)\}f.
\]
Now $Q-\mu V$ has larger eigenvalue
\[
   \g \Eq -{\textstyle \frac12}(q_- + q_+)
   + {\textstyle \frac12}\sqrt{(q_- + q_+)^2 - 4(q_- - q_+)\mu + 4\mu^2}.
\]
We fix $\mu$ at $\frac12(q_- - q_+)$ to obtain the minimum value
$\frac12\l(\sqrt{q_-} - \sqrt{q_+}\r)^2$ of $\g$. Hence, for $\e>0$
and some constant $A_{\e}$,
\begin{align*}
  \oP^{\pm}\l[\Phi_X(t) \le \e t\r]
  &\Eq \oP^{\pm}\bigl[\mu(\e t - \Phi_X(t))\ge 0\bigr]
  \Le \oE^{\pm}\exp\{\mu\e t - \mu\Phi_X(t)\}
\\
  &\Le A_{\e}\exp\l\{{\textstyle \frac12}\e(q_- - q_+)t -
  {\textstyle \frac12}\l(\sqrt{q_-} - \sqrt{q_+}\r)^2 t\r\}.
\end{align*}
The lemma follows.
\end{proof}

For a fine paper proving very precise large-deviation results for
Markov chains via explicit calculation, see \citet*{BHK}.

\subsection{When $\b < \b_{\rm c}$}\label{SSsubcrit}
\begin{figure}[ht]
\begin{center}
\includegraphics[scale = 0.8]{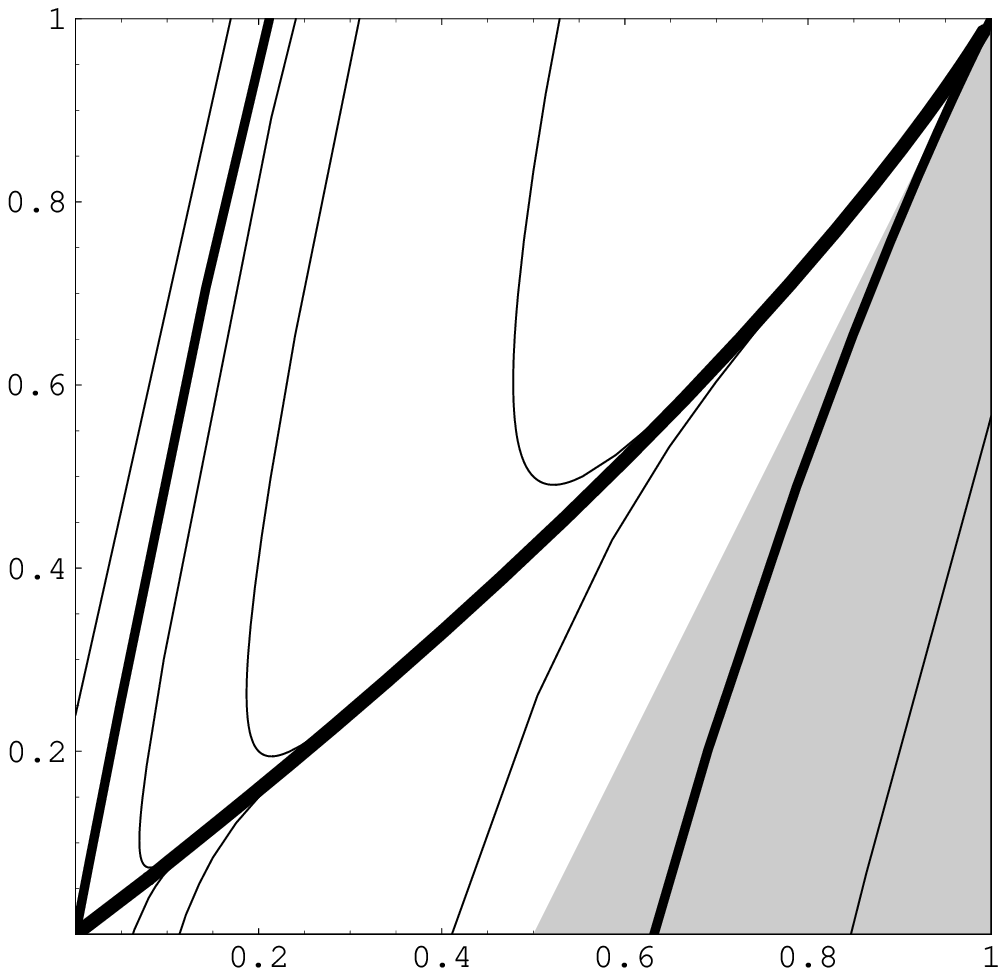}
\end{center}
\caption{\emph{Mathematica} picture of
the critical case when $q_+=1$, $q_-=4$, $\b = 0.5$}
\label{crit}
\end{figure}
\begin{lemma}\label{Lsubcrit}
When $\b\le \b_{\rm c}$,
\begin{enumerate}[{\rm (a)}]
\item $H^{-+}(0) = \oP^-[N^+(0) = 0] = 0$,
\item $H^{-+}(1-) = \oP^-[N^+(0) < \infty] = 1$,
\item $H^{+-}(0) = \oP^+[N^-(0) = 0] > 0$,
\item $H^{+-}(1-) = \oP^+[N^-(0)< \infty] = 1$.
\end{enumerate}
\end{lemma}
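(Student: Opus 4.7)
The plan is to prove parts (a)--(d) by reducing them to two expected-number computations, $\oE^-N^+(0)$ and $\oE^+N^-(0)$, combining the many-to-one formula for the binary-branching process with the Feynman--Kac exponential martingales of Subsection~\ref{XPhi}. Part (a) is a quick stand-alone spine argument: fix a lineage through the family tree by picking one daughter at each split; its position realises the chain $X$ of Subsection~\ref{XPhi} started at $-$ at $0$, which has positive drift $(q_--q_+)/(q_++q_-)$, and so it almost surely first crosses $0$ from below at a finite time; the particle carrying the spine at that moment is a member of $S^+(0)$, giving $\oP^-[N^+(0)\ge 1]=1$.

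For (b), introduce the ``left-of-zero'' population $W(t):=$ the set of particles alive at time $t$ whose entire past trajectory (ancestors included) has stayed in $(-\infty,0]$. By construction $W$ grows by $1$ at every birth inside $W$ (rate $\b$ per member) and loses one particle precisely when a $W$-particle first reaches $0$ from below --- and those loss events are the members of $S^+(0)$. Matching births to losses and taking expectations yields
\[
   \oE^-[\#\text{absorptions by }t]\Eq 1-\oE^-W(t)+\int_0^t\b\,\oE^-W(s)\,\rd s,
\]
while many-to-one gives $\oE^-W(s)=\re^{\b s}\,\oP^-[\sigma>s]$, with $\sigma$ the first upcrossing of $0$ by $X$ started at $-$ at $0$ (finite a.s.\ by the drift). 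An integration by parts collapses the right-hand side, as $t\to\infty$, to the pleasingly clean identity $\oE^-N^+(0)=\oE^-\re^{\b\sigma}$. Now for $\b\le\b_{\rm c}$ the equation $\g(\mu)=-\b$ has a real root, because as shown in Subsection~\ref{XPhi} the minimum of $\g$ equals $-\b_{\rm c}$, attained at $\mu=(q_--q_+)/2$; fix such a root $\mu>0$ and the corresponding positive eigenvector $f$ of $Q-\mu V$, and note that $M(t):=f(X(t))\exp\{-\mu\Phi_X(t)+\b t\}$ is then a non-negative $\oP^-$-martingale. Fatou applied to $M(\sigma\wedge t)$ as $t\to\infty$ gives $f(+)\,\oE^-\re^{\b\sigma}\le f(-)$, so $\oE^-N^+(0)<\infty$, proving (b).

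Parts (c) and (d) come from running the identical argument on the other side. Using the ``right-of-zero'' population $W^+$ and the same bookkeeping, one obtains
\[
   \oE^+N^-(0)\Eq \oE^+\re^{\b\tau}\One\{\tau<\infty\},
\]
where $\tau$ is the first time $X$ starting $+$ at $0$ becomes negative. The new subtlety is that $\oP^+[\tau=\infty]>0$, so in applying Fatou to the same $M(t)$ one must check $M(t)\to 0$ on $\{\tau=\infty\}$; the drift asymptotic $\Phi_X(t)/t\to(q_--q_+)/(q_++q_-)$ reduces this to $\mu_+(q_--q_+)>\b(q_++q_-)$, and the quadratic satisfied by $\mu_+$ (the larger root of $\g(\mu)=-\b$) reduces this further to the elementary inequality $\mu_+>\b$, which holds throughout $(0,\b_{\rm c}]$. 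Fatou then gives
\[
   \oE^+N^-(0)\Le \frac{q_+}{q_++\mu_+-\b},
\]
and the explicit formula for $\mu_+$ shows the right-hand side is monotone increasing in $\b$ and attains its maximum value $\sqrt{q_+/q_-}<1$ at $\b=\b_{\rm c}$. Finiteness gives (d); Markov's inequality then yields $\oP^+[N^-(0)=0]\ge 1-\sqrt{q_+/q_-}>0$, proving (c) --- and, as a dividend, the probability bound of Theorem~\ref{phase}(b).

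The principal technical delicacy is the integration-by-parts cancellation that produces $\oE\re^{\b\tau}$: in (b) both $\oE^-W(t)$ and $\int_0^t\b\,\oE^-W(s)\,\rd s$ vanish as $t\to\infty$, whereas in (c)/(d) both blow up like $\rho\,\re^{\b t}$ yet their difference stays bounded. This cancellation is precisely what ties the final bound to the existence of a real root of $\g(\mu)=-\b$, i.e.\ to the hypothesis $\b\le\b_{\rm c}$.
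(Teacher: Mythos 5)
Your argument is correct, but it takes a genuinely different route from the paper's. The paper proves (b) by combining the large-deviation estimate of Lemma \ref{FKlem} with a many-to-one bound and Borel--Cantelli (eventually every particle is far to the right of $0$, so only finitely many ever return), proves (c) by the soft observation that a $+$-particle can sit still arbitrarily long without branching, and proves (d) by the geometric fact that the two probabilistic integral curves cannot cross inside the unit square; moreover, since Lemma \ref{FKlem} is stated only for $\b<\b_{\rm c}$, the critical case of (b) is really covered by the separate `contradiction of slopes' sketch that follows. Your route instead converts everything into the exact first-passage identities $\oE^-N^+(0)=\oE^-\re^{\b\sigma}$ and $\oE^+N^-(0)=\oE^+[\re^{\b\tau};\tau<\infty]$ and bounds these by the Feynman--Kac martingale built from a positive eigenvector of $Q-\mu V$ with eigenvalue $-\b$; the existence of the real root $\mu$ of $\g(\mu)=-\b$ is exactly the hypothesis $\b\le\b_{\rm c}$, so the critical case needs no separate treatment. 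This buys more: you obtain the quantitative bounds $\oE^+N^-(0)\le q_+/(q_++\mu_+-\b)\le\sqrt{q_+/q_-}$ and hence $\oP^+[N^-(0)=0]\ge 1-\sqrt{q_+/q_-}$, i.e.\ \eqref{crit+-}, \eqref{prob>0} and Theorem \ref{phase}(b), which the paper only reaches later via the eigenvector direction at $(1,1)$ together with the left-continuity argument for the nested models. Two small remarks. First, the bookkeeping identity for $W(t)$ and the relation $\oE^-W(s)=\re^{\b s}\oP^-[\sigma>s]$ are many-to-one/stopping-line facts asserted rather than proved, but this is the same level of rigour the paper itself adopts for its branching-process claims. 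Second, in (c)/(d) the check that $M(t)\to 0$ on $\{\tau=\infty\}$ is not actually needed for the Fatou inequality --- non-negativity of $M$ already gives $f(-)\,\oE^+[\re^{\b\tau};\tau<\infty]\le f(+)$ --- though it would be needed if you wanted equality there; and note that after the integration by parts the terms $\re^{\b t}\oP^+[\tau>t]$ cancel exactly, not merely up to a bounded error.
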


\begin{note} Though Figure \ref{crit} relates to the case
when $\b=\b_{\rm c}$, pictures for the subcritical case when
$\b<\b_{\rm c}$ look very much the same.
\end{note}

\begin{proof}
Result (a) remains trivial.

By Lemma \ref{FKlem}, there exist $\e>0$, $\kp>0$ and $A>0$ such
that, for a single particle moving according to $Q$-matrix $Q$, we
have
\[
   \re^{\b t}\oP[\Phi_X(t)\le \e t] \le A\re^{-\kp t}.
\]
For the branching process, the expression on the left-hand side is
the expected number of particles with $\Phi$-value less than or
equal to $\e t$ at real time $t$. So the probability that some
particle has $\Phi$-value less than or equal to $t$ is at most
$A\re^{-\kp t}$.

By the Borel--Cantelli Lemma, there will almost surely be a random
positive integer $n_0$ such that for all $n\ge n_0$, every
particle alive at real time $n$ will have $\Phi$-value greater
than $\e n$.  Since $\Phi$ can only move left at speed $1$, there
must almost surely come a time after which no particle has a
positive $\Phi$-value.  Hence $\oP^-[N^+(0)=\infty] = 0$, and
result (b) is proved.

Now suppose for the purpose of contradiction that $\oP^+[N^-(0)>0]
= 1$.  Since a particle started at state $+$ can remain there for
an arbitrary long time without giving birth, it follows that any
particle in the $+$ state and with any positive $\Phi$-value will
have a descendant for which $\Phi$ will become negative.  This
contradicts what we proved in the previous paragraph, so result
(c) is established.

Since the $y=H^{-+}(x)$ curve connects $(1,1)$ to $(0,0)$ and the
other probabilistic curve $x=H^{+-}(y)$ starts at $(H^{+-}(0),0)$
where $H^{+-}(0)>0$, and since these curves cannot cross at an
interior point of the unit square, it must be the case that
$H^{+-}(1-) = 1$, so that property (d) holds.
\end{proof}

In the analogue of Figure \ref{crit} for a subcritical case
(which, as I have said, looks very much like Figure \ref{crit}),
motion along the higher probabilistic curve $y=H^{-+}(x)$ will
again start at $(\t,H^{-+}(\t))$ and move towards $(0,0)$, this
because $N^+(\f)\to\infty$. Since $N^-(\f)\to 0$, the natural
probabilistic motion of the lower curve $x=H^{+-}(y)$ will
converge to $(1,1)$; but the $\f$-reversal means that the
dynamical system will move downwards along this curve.

\begin{proof}[Sketch of geometric proof that $H^{-+}(1-) = 1$ if
$\b\le \b_{\rm c}$] It is enough to prove the result when
$\b=\b_{\rm c}$.  Let $m = \sqrt{q_-/q_+}$, the slope of the
unique eigenvector of the linearity matrix at $(1,1)$. Draw the
line of slope $m$ from $(1,1)$ down to the $y$-axis, the sloping
side of the shaded triangle in the picture.  Now it is
particularly easy to check that at any point of the sloping side
the ($\rd y/\rd x$)-slope of an integral curve is greater than
$m$. If the convex curve $y=H^{-+}(x)$ hit the vertical side of
the triangle at any point lower than $1$, we would have
`contradiction of slopes' where it crossed that sloping side.
\end{proof}

\subsection{Nested models and continuity at phase
transition}
Take $\b_0>\b_{\rm c}$ and let $M_{\b_0}$ be our model
with initial law $\oP^+_{\b_0}$ (in the obvious sense).  Label
birth-times $T_1$, $T_2$, $T_3$, \ldots\ in the order in which
they occur, and for each $n$ call one of the two children born at
$T_n$ `first', the other `second'. Let $U_1$, $U_2$, $U_3$,
\ldots\ be independent random variables each with the uniform
distribution on $[0,1)$. We construct a nested family of models
$\{M_{\b}:\b\le \b_0\}$ as follows.

Fix $\b<\b_0$ for the moment. If $U_n>\b/\b_0$, erase the whole
family tree in $M_{\b_0}$ descended from the second child born at
time $T_n$. Of course, this family tree may already have been
erased earlier. In this way, we have a model $M_{\b}$ with desired
law $\oP^+_{\b}$.  The set $S^-(\f,\b)$ will now denote the
$S^-(\f)$ set for the `nested' model $M_{\b}$, and $N^-(\f,\b)$
will denote its cardinality.

A particle $i$ contributing to $S^-(0,\b)$ determines a path in
$\{+,-\}\times [0,\infty)$:
\[
    \{(\mbox{Ancestor}_i(t), \Phi_i(t)): t < \rho_i\}
\]
where $\rho_i$ is the first time after which $\Phi_i$ becomes
negative.  Along that $M_{\b}$-path, there will be finitely many
births. Now, for fixed $\b$ it is almost surely true that $U_n\ne
\b$ for all $n$. It is therefore clear that, almost surely, for
$\b'<\b$ and $\b'$ sufficiently close to $\b$, the $M_{\b}$-path
will also be a path of $M_{\b'}$.  In other words, we have the
left-continuity property
\[
     S^-(0,\b) = \bigcup_{\b'<\b}S^-(0,\b'),\mbox{ almost surely}.
\]
It therefore follows from the Monotone-Convergence Theorem that
\beqn{leftcont}
    \oE^+ N^-(0,\b_{\rm c}) = \uparrow \lim_{\b \uparrow \b_{\rm c}}
    \oE^+ N^-(0,\b).
\ee

Clearly, something goes seriously wrong in regard to
right-continuity at $\b_{\rm c}$. Suppose we have a path which
contributes to $S^-(0,\b)$ for all $\b>\b_{\rm c}$.  Then, for all
birth-times $T_n$ along that path we have $U_n\le \b/\b_0$ for all
$\b>\b_{\rm c}$ and hence $U_n\le \b_{\rm c}/\b_0$.  Hence
\[
   S^-(0,\b_{\rm c}) = \bigcap_{\b>\b_{\rm c}}S^-(0,\b).
\]
But it is clearly possible to have a decreasing sequence of
infinite sets with finite intersection.  And recall that (more
generally) the Monotone-Convergence Theorem is guaranteed to work
`downwards' (via the Dom\-inated-Convergence Theorem) only when one
of the random variables has finite expectation.

\subsection{Expectations and an embedded discrete-parameter
branching process}
If either of the curves $y=H^{-+}(x)$ or $x = H^{+-}(y)$
approaches $(1,1)$, it must do so in a definite direction and it
is well known (and an immediate consequence of l'H\^opital's rule)
that that direction must be an eigenvector of the linearity matrix
at $(1,1)$. When $\b=\b_{\rm c}$, there is (as we have seen
before) only one eigenvector
$\bigl(\begin{smallmatrix}m\\\noalign{\vskip2pt}1\end{smallmatrix}\bigr)$
with $m =\sqrt{q_-/q_+}$.  Thus
\beqn{crit-+}
   \oE^- N^+(0,\b_{\rm c}) = (H^{-+})'(1,\b_{\rm c}) =
   m = \sqrt{q_-/q_+}.
\ee

We know that if $\b< \b_{\rm c}$, then $H^{+-}(1-) = 1$ and we can
easily check that (as geometry would lead us to guess)
\[
  \oE^+ N^-(0,\b) = (H^{+-})'(1,\b) \le  1/m = \sqrt{q_+/q_-},
\]
and now, by equation \eqref{leftcont} we see that
\beqn{crit+-}
  \oE^+ N^-(0,\b_{\rm c}) \le  1/m = \sqrt{q_+/q_-}.
\ee
In particular, $\oP^+[N^-(0,\b_{\rm c})=\infty] = 0$, and so, in
fact, $H^{+-}(1)=1$ and we have equality at \eqref{crit+-}, whence
\beqn{prob>0}
   \oP^+[N^-(0,\b_{\rm c})\ge 1] \le
   \oE^+ N^-(0,\b_{\rm c}) = \sqrt{q_-/q_+},
\ee
part of Theorem \ref{phase}.

Now, for $\f>0$, let
\[
   b(\f) = \oE^+ N^+(\f), \quad c(\f) = \oE^- N^+(\f).
\]
Then
\[
   b'(\f) = q_+\{c(\f)-b(\f)\} + \b b(\f), \quad \mbox{etc.,}
\]
so that the linearization matrix at $(1,1)$ controls expectations.
We easily deduce the following theorem.

\begin{theorem}\label{balance}
When $\b=\b_{\rm c}$,
\[\begin{array}{ll}
  \oE^+ N^+(\f)=\re^{\uhalf(q_- - q_+)\f},\quad
  &\oE^- N^+(\f)=\sqrt{\frac{q_-}{q_+}}\;
  \re^{\uhalf(q_- - q_+)\f},
\\
  \oE^+ N^-(\f)=\sqrt{\frac{q_+}{q_-}}\;
  \re^{-\uhalf(q_- - q_+)\f},\quad
  &\oE^- N^-(\f)=\re^{-\uhalf(q_- - q_+)\f}.
\end{array}
\]
\end{theorem}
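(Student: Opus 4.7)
The plan is to deduce the four identities from the linear ODE systems governing the expectations, using the degenerate spectrum of the linearization at $\b=\b_{\rm c}$ together with probabilistically-determined initial conditions at $\f=0$. Alongside $b(\f):=\oE^+N^+(\f)$ and $c(\f):=\oE^-N^+(\f)$ (already defined in the excerpt), I would introduce $d(\f):=\oE^+N^-(\f)$ and $e(\f):=\oE^-N^-(\f)$, and obtain their ODE system by the same derivation as in Section~2.2 applied to $\oE^{\pm}\t^{N^-(\f)}$, differentiated in $\t$ at $\t=1$. In this second set-up the $-$ particle moves forward in $\f$ whereas the $+$ particle moves backward, the reverse of the $(g^{++},h^{-+})$ situation; consequently the linearization matrix governing $(d,e)$ is the \emph{negative} of the one governing $(b,c)$, so the two systems have identical eigenvectors but opposite eigenvalues.

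At $\b=\b_{\rm c}$ the characteristic polynomial of the $(b,c)$-matrix has a double root $\lm=\tfrac12(q_--q_+)$, and its one-dimensional eigenspace is, by the l'H\^opital-type observation just before equation \eqref{crit-+}, the line of slope $m=\sqrt{q_-/q_+}$ at $(1,1)$ in the $(x,y)$-plane --- that is, the span of $(1,m)$. Thus $(d,e)$ is governed by the double eigenvalue $-\lm$ on the same eigenspace. For the initial data, equation \eqref{crit-+} gives $c(0)=m$, and $d(0)=1/m$ follows from the equality version of \eqref{crit+-} that the excerpt establishes in the sentence immediately after \eqref{prob>0}. For the remaining two, $b(0)=e(0)=1$ by a short trajectory argument: the initial particle belongs to $S^+(0)$ (resp.\ $S^-(0)$) through its motion at time $0$, and every other particle is excluded since its ancestor trajectory has strictly visited $\Phi>0$ (resp.\ $\Phi<0$) before any return to the origin, so the maximum (resp.\ minimum) clause in the definition of $S^\pm(0)$ fails.

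To finish, one observes that both initial vectors $(b(0),c(0))=(1,m)$ and $(d(0),e(0))=(1/m,1)=(1/m)(1,m)$ lie precisely in the unique eigenvector direction, so the two solutions are
\[
  (b(\f),c(\f))=(1,m)\,\re^{\lm\f},\qquad (d(\f),e(\f))=(1/m,1)\,\re^{-\lm\f}.
\]
Reading off the four components and substituting $\lm=\tfrac12(q_--q_+)$ and $m=\sqrt{q_-/q_+}$ gives Theorem \ref{balance}. The only conceptually delicate step --- what the excerpt calls the ``balance'' at $\b_{\rm c}$ --- is that the probabilistically-given initial data happen to lie in the single eigenvector direction: a generalized-eigenvector contribution would have produced a spurious polynomial-in-$\f$ factor, and it is the sharp identities $c(0)=m$ and $d(0)=1/m$, available exactly at $\b=\b_{\rm c}$, that rule this out.
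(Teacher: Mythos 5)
Your proof is correct and follows essentially the same route the paper intends: the paper's own proof consists of the remark that the linearization matrix at $(1,1)$ governs the linear ODE system for the expectations, followed by ``we easily deduce'', and you supply exactly the missing details --- the double eigenvalue $\tfrac12(q_--q_+)$ at $\b=\b_{\rm c}$ with its single eigendirection of slope $m$, the sign-reversal of the matrix for the $(\oE^+N^-,\oE^-N^-)$ system, and the initial data $(1,m)$ and $(1/m,1)$ drawn from \eqref{crit-+}, the equality case of \eqref{crit+-}, and the trivial identities $N^{\pm}(0)=1$ under $\oP^{\pm}$. Your closing observation, that the probabilistic initial vectors landing exactly on the eigenline is what excludes a polynomial-in-$\f$ factor from the degenerate spectrum, is precisely the ``balance'' the paper is pointing at.
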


For any $\b$, we can define the discrete-parameter
branching-processes $\{W^{\pm}(n):n\ge 0\}$ as follows.  Let $B_i$
be the birth time and $D_i$ the death time of particle~$i$. Recall
that $\Phi_i$ is defined on $[0,D_i)$.  Let $\s_i(0)=0$ and, for
$n\ge 1$, define
\[
  \s_i(n) \Def \inf\{t: B_i\le t < D_i: t>\s_i(n-1);
  (-1)^n\Phi_i(t)>0\},
\]
with the usual convention that the infimum of the empty set is
$\infty$.  Let
\[
   W^+(n) \Def \sharp\{i: \s_i(2n)<\infty\}, \quad
   W^-(n) \Def \sharp\{i: \s_i(2n+1)<\infty\}.
\]
The `$W$' notation is suggested by `winding operators' in linear
Wiener--Hopf theory.
\begin{theorem}\label{DPbranch}
$W^{\pm}$ is a classical branching process under $\oP^{+}_{\b}$,
and is critical when $\b=\b_{\rm c}$.
\end{theorem}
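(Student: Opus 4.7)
The plan is to read off the branching structure of $W^{\pm}$ from the strong Markov property of the underlying branching model applied at the successive $0$-crossing times of $\Phi$ along each lineage, and then to obtain criticality at $\b=\b_{\rm c}$ by evaluating the per-parent offspring mean from the derivative identities at $(1,1)$ already established in \eqref{crit-+} and \eqref{crit+-}.

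\textbf{Branching structure.} I treat $W^+$ under $\oP^+_{\b}$; the case of $W^-$ is symmetric. The initial $(0,+)$-particle is the unique generation-zero ancestor. A descendant $i$ with $\sigma_i(1)<\infty$ corresponds to its lineage first realizing a transverse down-crossing of $0$; at that moment the particle is at position $0$ in state $-$, and by the strong Markov property the sub-family-tree launched at that instant is an independent copy of the model under $\oP^-_{\b}$. Applying the same argument one step further, each particle $j$ in that sub-tree with $\sigma_j(2)<\infty$ relaunches an independent copy of $\oP^+_{\b}$. Iterating alternately between the two colours, $W^+(n+1)$ is obtained from $W^+(n)$ by replacing each parent by an independent and identically distributed random number of children, that number being distributed as $W^+(1)$ under a fresh $\oP^+_{\b}$-copy; this is precisely the classical Galton--Watson structure. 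The same two-step construction handles $W^-$.

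\textbf{Offspring mean and criticality.} The two-step decomposition above writes the number of $W^+(1)$-offspring under $\oP^+_{\b}$ as $\sum_{i} C_i$, where the sum is over the $N^-(0)$ particles hitting $(0,-)$ and the $C_i$ are i.i.d.\ copies of $N^+(0)$ under $\oP^-_{\b}$, independent of the summation range. By Wald's identity,
\[
    \oE^+\bigl[W^+(1)\bigr] \Eq \oE^+\bigl[N^-(0,\b)\bigr]\cdot \oE^-\bigl[N^+(0,\b)\bigr].
\]
At $\b=\b_{\rm c}$, equation \eqref{crit-+} gives $\oE^-[N^+(0,\b_{\rm c})]=\sqrt{q_-/q_+}$ and equation \eqref{crit+-}, strengthened to equality by the remark immediately after it, gives $\oE^+[N^-(0,\b_{\rm c})]=\sqrt{q_+/q_-}$. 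The product is $1$, so $W^+$ (and by symmetry $W^-$) is critical at $\b=\b_{\rm c}$.

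\textbf{Main obstacle.} The only genuine technical point is the careful application of the strong Markov property when several lineages restart more or less simultaneously: one must work with the filtration generated by the parent generation augmented by the ordering of the stopping times $\sigma_i(2n)$, $\sigma_i(2n+1)$, and invoke the standard branching-Markov independence of the daughter sub-trees at each birth. Given the independence conditions built into the model, this is essentially the same book-keeping implicit in the recursive decomposition of $g^{++}$ and $h^{-+}$ in Section 2, and I would not spell out further details.
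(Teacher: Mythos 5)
Your proposal is correct and follows exactly the route the paper intends: the paper leaves the proof to the reader but says it ``hinges on the case $\f=0$ of Theorem \ref{balance}'', i.e.\ on the balance $\oE^+ N^-(0,\b_{\rm c})\,\oE^- N^+(0,\b_{\rm c})=1$, which is precisely the product you compute (via the equivalent statements \eqref{crit-+} and \eqref{crit+-}, the latter strengthened to equality) after establishing the Galton--Watson structure by the strong Markov/branching property at the alternating zero-crossing times.
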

The proof (left to the reader) obviously hinges on the case when
$\f=0$ of Theorem \ref{balance}. And do have a think about the
consequences of the `balance'
\[
  \oE^- N^+(\f,\b_{\rm c})\,\oE^+ N^-(\f,\b_{\rm c}) \Eq 1
\]
in that theorem.

\subsection{When $\t=0$ or $1$}\label{01}
If we take $\t=0$ and set
\[
  b(\f) := \oP^+[N^-(\f)=0], \quad
  c(\f) := \oP^-[N^-(\f)=0],
\]
then $\{(b(\f),c(\f)): \f\ge 0\}$ is a solution of the
$\f$-reversed dynamical system such that
\[
   b(\f) = H^{+-}(c(\f)).
\]
When $\b>\b_{\rm c}$, this solution stays at equilibrium point
$(0,0)$. When $\b\le \b_{\rm c}$, $(b(\f),c(\f))$ moves (as $0\le
\f \uparrow\infty$) from $(H^{+-}(0),0)$ to $(1,1)$ tracing out
the right-hand bold curve in the appropriate version of Figure
\ref{crit}.
\bigskip

When $\t=1$, $\{(B(\f),C(\f)): \f\ge 0\}$, where
\[
  B(\f) := \oP^+[N^-(\f)<\infty], \quad
  C(\f) := \oP^-[N^-(\f)<\infty],
\]
gives a solution of the $\f$-reversed dynamical system.  When
$\b\le \b_{\rm c}$, this solution stays at the equilibrium point
$(1,1)$. When $\b>\b_{\rm c}$, $(B(\f),C(\f))$ moves (as $0< \f
\uparrow\infty$) from $(H^{+-}(1-),1)$ to $(0,0)$ tracing out the
bold upper curve in the appropriate version of Figure \ref{Super}.
Of course, there is an appropriate version (`with $+$ and $-$
interchanged') for the lower curve.

\subsection{A tantalizing question}
When $\b>\b_{\rm c}$, we have for the function
$A(\cdot)=A(\cdot,\b)$ of Theorem \ref{Aexists}:
\[
   A(\t,\b) = \oE^+\t^{N^-(0,\b)}.
\]

When $\b\le \b_{\rm c}$, we have, for $0<\t<1$, $A(\t,\b) =
\oE^+\t^{Y_{\b}}$ for some random variable $Y_{\b}$.  Can we find
such a $Y_{\b}$ which is naturally related to our model?  In
particular, can we do this when $\b=\b_{\rm c}$?  It would be very
illuminating if we could.

\emph{What is true for all $\b>0$} is that if $X$ and $\Phi_X$ are
as at the start of Subsection \ref{XPhi} and $\tau_X^-(0) :=
\inf\{t: \Phi_X(t)<0\}$, then
\[
  \oE^+ \exp\{-\b \tau_X^-(0)\} = a_1,
\]
with $a_1$ as in Theorem \ref{Aexists}; and this tallies with $a_1
= \oP^+[N^-(0)=1]$ when $\b>\b_{\rm c}$.  Proof of the statements
in this paragraph is left as an exercise.

\paragraph{Acknowledgements}
I certainly must thank a referee for pointing out some typos and,
more importantly, a piece of craziness in my original version of a
key definition. The referee also wished to draw our attention
(mine and yours) to an important survey article on tree-indexed
processes \cite{Pem} by Robin Pemantle. The process we have
studied is, of course, built on the tree associated with the
underlying branching process. But our Wiener--Hopferization makes
for a rather unorthodox problem.

I repeat my thanks to Chris Rogers and John Toland for help with
the previous appetizer.  My thanks to Ian Davies and Ben
Farrington for helpful discussions.

And I must thank the Henschel String Quartet --- and not only for
wonderful performances of Beethoven, Haydn, Mendelssohn, Schulhoff
and Ravel, and fascinating discussions on music and mathematics
generally and on the connections between the Mendelssohns and
Dirichlet, Kummer, Hensel, Hayman. If I hadn't witnessed something
of the quartet's astonishing dedication to music, it is very
likely that I would have been content to leave things with that
early appetizer and not made even the small advance which this
paper represents. So, Happy music making, Henschels!


\end{document}